\newtheorem{theorem}{Theorem}
\newtheorem{assumption}{Assumption}
\newtheorem{lemma}[theorem]{Lemma}
\newcommand{\eps}{\varepsilon}
\renewcommand{\epsilon}{\eps}
\begin{document}

\title{Numerical solution of stochastic partial differential equations with correlated noise}
\author{
Minoo Kamrani\footnote{Department of Mathematics, Faculty of Sciences,
Razi University, Kermanshah, Iran}
\ \&\ 
Dirk Bl\"omker\footnote{Institut f\"ur Mathematik, Universit\"at Augsburg, 86159 Augsburg, Germany}
}

\newcommand{\R}{{\mathbb R}}
\newcommand{\CO}{{\cal O}}
\maketitle

\begin{abstract}
In this paper we investigate the numerical solution 
of stochastic partial differential equations (SPDEs) for a
 wider class of stochastic equations. We focus on non-diagonal colored noise instead of 
the usual space-time white noise. By applying a spectral Galerkin method 
for spatial discretization and a numerical scheme in time introduced by Jentzen \& Kloeden, 
we obtain the rate of  path-wise convergence in the uniform topology. 
The main assumptions are either uniform bounds on the spectral Galerkin approximation
or uniform bounds on the numerical data.
Numerical examples illustrate the theoretically predicted convergence rate.
\end{abstract}
\textit{Keywords:} stochastic partial differential
equations,  spectral Galerkin approximation, time discretization, colored noise, order of convergence,
uniform bounds.\\
\textit{MSC2010:} 60H35, 60H15, 60H10, 65M12, 65M60 
 \\\hrule

  

\section{Introduction}

Let $T>0$,  
$(\Omega,\mathcal{F},\mathbb{P})$ be a probability space and $V$ is a Banach space. 
Suppose the space-time continuous stochastic process 
$X:[0,T]\times \Omega \rightarrow V$ 
is the unique solution of the the following 
stochastic partial differential equation (SPDE) 
\begin{equation}\label{intr1}
dX_{t}=\left[AX_{t}+F(X_{t})\right]dt+dW_{t},
\qquad X_{t}(0)=X_{t}(1)=0,
\quad X_0=0,
\end{equation}
for $t\in [0,T]$ and $x\in (0,1)$, where the operator $A$ denotes an unbounded operator, 
for example the Laplacian. The noise is given 
by a Wiener process $W_{t}$, $t\in [0,T]$ defined later.

The main purpose of this article is to consider a spectral Galerkin 
approximation of (\ref{intr1}) in $L^\infty$, where the noise is colored.
The main results are formulated in an abstract way so that 
in principle they should
apply to other approximation methods like finite elements, but
here we only verify the applicability for spectral Galerkin methods
for simplicity of presentation.

A key point is the uniform bound on the numerical data. Alternatively,
we can uniformly bound the Galerkin approximation,
which is for spectral methods frequently straightforward to verify, by 
using energy-type a-priori estimates.

Of course the result should apply for higher dimensional 
domains, differential operators of higher order, 
or other boundary conditions like Dirichlet,
but as an example we stick with this relatively simple situation here. 

In \cite{Ref1362} the Galerkin approximation was already considered 
for a stochastic Burgers equation with colored noise, 
but here we present this method in a more general setting, 
and not only for the Burgers equation. 
The main novelty, as in  \cite{Ref1362} or \cite{Ref1},
is to bound the spatial and temporal discretization error
in the uniform topology. The space of continuous or H\"older-continuous 
functions is a natural space for stochastic convolutions.
For instance, if for space-time white noise the stochastic convolution 
is in $L^2$ in space, it is already continuous. 
In a recent publication  \cite{Ref2014} Cox \& van Neerven established 
a time-discretization error in H\"older spaces, but the spatial 
error in UMD-spaces. We strongly believe, that 
working in fractional Sobolev-spaces $W^{\alpha,p}$ 
with $\alpha>0$ small and $p\gg1$ large
should yield similar results than ours,
but we present here a simple proof yielding uniform bounds in time only. 

In \cite{Ref1, Ref2} the Galerkin approximation was considered for 
a simple case of SPDEs of the type of (\ref{intr1}), 
either without time-discretization or in different spaces. Moreover,
the Brownian motions in the Fourier expansion of the noise are independent. 
But in general the spatial 
covariance operator of the forcing does not necessarily 
commute with the linear operator $A$, thus 
we consider here the case where the Brownian 
motions are not independent.

Many authors have investigated the spectral Galerkin method for 
this kind of equation with space-time white noise. 
See for example \cite{Ref13, Ref1303, Ref2009, Ref2, Ref2310, Ref1312, Ref2304}.
There are also many articles about finite difference methods 
\cite{Ref5, Ref4, Ref13, Ref1308, Ref16}. 
The existence and uniqueness of solutions of the stochastic equation
was studied in \cite{Ref1995, Ref6} for space-time white noise.
In our proofs, as the nonlinearity allows for polynomial growth,
we do not rely on the global existence of solutions,
but assume that the numerical approximation remains uniformly bounded.
In the limit of fine discretization, this will ensure global existence 
of the solutions and a global error bound for the numerical approximation.

Our aim here is to extend the results of \cite{Ref1362} 
to the case of more general nonlinearities, with local 
Lipschitz conditions and polynomial growth. 
For spatial discretization of equation (\ref{intr1}) 
we apply a spectral Galerkin approximation as already discussed in \cite{Ref1} 
and for the time discretization we follow the method proposed in \cite{Ref2}. 

It should be mentioned that the spatial discretization error is 
obtained by the results of \cite{Ref1, Ref1362}. 
We will recall their main results in Section 1. 
In this article we focus on the time discretization. 
Not treated in \cite{Ref1} but already in \cite{Ref1362}, 
we consider here also the case of colored noise being not diagonal 
with respect to the eigenfunctions 
of the Laplacian. As the final  result  we obtain an error estimate for 
the full space-time discretization.
The main result of \cite{Ref1362} in combination with the results  
presented in this paper yields the
convergence results  in the uniform topology of continuous 
functions for the numerical approximations 
of a wider class of SPDEs with colored noise.
The key assumption is a uniform bound on the 
numerical approximations, that allows for local Lipschitz-conditions only.

The paper is organized as follows.
Section 2 gives the setting and the assumptions. 
In Section 3 we recall the results on the  spatial discretization error, 
and in Section 4 estimates for the temporal error are derived. 
Finally, in the last section a simple numerical example is presented,
in order to illustrate the results.


 \section{Setting and assumptions}


Let $V,W$ be two $\mathbb{R}$-Banach spaces such that $V\subseteq W$. 
Suppose that the unbounded and invertible linear operator $A$  
generates an analytic semigroup $S_{t}$ on $V$ 
that extends to the larger space $W$ , i.e., $S_{t}:W \rightarrow V$. 
Especially, $S_{t+s}=S_tS_s$ and $S_0=Id$.

Consider the following equation
\begin{equation}\label{intr}
dX_{t}=\left[AX_{t}+F(X_{t})\right]dt+dW_{t},
\qquad X_{t}(0)=X_{t}(1)=0,
\quad X_0=0,
\end{equation}
for $t\in [0,T]$ and $x\in (0,1)$. It should be mentioned that just for simplicity we assumed these initial and boundary conditions. \\
Suppose there are bounded linear operators
$P_{N} :V\rightarrow V$. The example we have 
in mind is  the spectral Galerkin method given by the orthogonal projection
$P_{N}v=\sum_{i=1}^{N}\int_{0}^{1}e_{i}(s) v(s) ds \cdot e_{i}$,
where $\{e_i\}_{i\in\mathbb{N}}$ are an orthonormal 
basis of eigenfunctions of $A$.
But any other approximation method like finite elements 
should work in a similar way, if we can satisfy our assumptions 
for the projections.

Consider the following assumptions already made in \cite{Ref1}.


\begin{assumption}[Semigroup]
Suppose for the semigroup, that $S:(0,T]\rightarrow L(W,V)$ is 
a continuous mapping that commutes with $P_{N}$ and satisfies
for given constants  $\alpha, \theta \in [0,1)$ and $\gamma \in (0,\infty)$
\begin{equation}\label{S1}
\sup_{0<t\leq T}\left(t^{\alpha}\|S_{t}\|_{L(W,V)}\right)< \infty,
~~\sup_{N\in \mathbb{N}} \sup_{0\leq t\leq T}(t^{\alpha}N^{\gamma}\|S_{t}-P_{N}S_{t}\|_{L(W,V)})< \infty,
\end{equation}
and 
\begin{equation}
\label{e:SGtime}
\|A^{\theta} S_{t}\|_{L(V,V)} \leq C t^{-\theta}
\quad\text{together with}
\quad
\|A^{-\theta}(S_{t}-I)\|_{L(V,V)} \leq t^{\theta}.
\end{equation}
\end{assumption}

The first assumption is crucial for  the spatial discretization,
while the second assumption (\ref{e:SGtime}) is mainly needed 
for the result on time-discretization,
in order to bound differences of the semigroup.
For example, for analytic semigroups generated by the Laplacian,
this is usually straightforward to verify. See for example \cite{Ref2013}.


\begin{assumption}[Nonlinearity]
\label{ass:nonlin}
Let $F:V\rightarrow W$ be a continuous mapping, 
which satisfies the following local Lipschitz condition.
There is a nonnegative integer $p$ and a constant   $L>0$ such that for all $u,v \in V$
\begin{equation}\label{drift condi}
\|F(u)-F(v)\|_{W}\leq L\|u-v\|_{V}(1+\|u\|_{V}^{p}+\|v\|_{V}^{p})\;.
\end{equation}

\end{assumption}

Let us remark that it is not a major restriction that we
assumed the operator $A$ to be invertible, as we can always 
consider for some constant $c$ the operator $\tilde A=A+cI$ and the nonlinearity
$\tilde F=F-cI$.
 

\subsection{The Ornstein-Uhlenbeck process}
\label{suse:OU}


\begin{assumption}[Ornstein-Uhlenbeck]
Let $O:[0,T]\times \Omega\rightarrow V$ be a stochastic process with continuous sample paths and there exists some $\gamma\in (0,\infty)$ such that
\begin{equation}\label{S3}
\sup_{N\in \mathbb{N}} \sup_{0\leq t\leq T} N^{\gamma}\|O_{t}(\omega)-P_{N}(O_{t}(\omega))\|_{V}< \infty,
\end{equation}
for every $\omega \in \Omega$.\\
Moreover,
\begin{equation}\label{eq45}
\sup_{0\leq t_1\leq t_2\leq T}\frac{\big\|O_{t_2}(\omega)-O_{t_1}(\omega)\big\|_V}{(t_2-t_1)^{\theta}}<\infty,\\
\end{equation}
for some $\theta \in (0,\frac{1}{2})$.
\end{assumption}


In order to give an example for this assumption we focus for the remainder of this subsection now on $L^2[0,1]$
with basis functions $e_k$ are given by the standard Dirichlet basis,
where for every  $k\in \mathbb{N}$ 
 \begin{equation*}\label{eq2}
e_{k}:[0,1]\rightarrow \mathbb{R},~~~ e_{k}(x)=\sqrt{2}\sin(k\pi x),~~x\in [0,1],
\end{equation*}
are smooth functions. For every $k \in N$ define the real numbers $\lambda_{k}=(\pi k)^{2}\in \mathbb{R}$.

Furthermore, let $Q$ be a symmetric non-negative operator,
given by the convolution with a translation invariant positive definite  kernel $q$.  
This means
\begin{equation}\label{eq:defQ}
<Q e_{k},e_{l}>=\int_0^{1}\int_0^{1}e_{k}(x)e_{l}(y)q(x-y)dy dx,
\end{equation}
for $k,l\in \mathbb{N}$.
Note that $Q$ is diagonal with respect to the standard Fourier basis,
but in general not with the Dirichlet basis.

We think of $Q$ being the covariance operator of a Wiener process $W$ in $L^2(0,1)$
and $q$ being the spatial correlation function of the noise process $\partial_t W(t)$.
See for example \cite{DB:noise} for a detailed discussion.

Let $\beta^{i}:[0,T]\times \Omega\rightarrow \mathbb{R},i\in \mathbb{N},$ 
be a family of Brownian motions that are not necessarily independent.
We usually think of $\beta^{i}(t)=\langle W(t),e_i\rangle$. See the discussion at end of this subsection.

Note that the variance of the  Brownian motion is $\sigma_k^2 = <Q e_{k},e_{k}>$,
which means that for $\sigma_k\not=0$ the process $\sigma_i^{-1} \beta^{i}(t)$
is a standard Brownian motion. Moreover, the $\beta^{i}$'s 
are correlated as given by
\begin{equation*}\label{eqn2}
\mathbb{E}\left[\beta^{k}(t)\beta^{l}(t)\right]= \langle Q e_{k},e_{l} \rangle \cdot t,~~ \text{for } k,l\in \mathbb{N}.
\end{equation*}
For the regularity assume that for some $\rho>0$ we have 
\begin{equation}\label{MatQ}
\sum_{i\in \mathbb{N}}\sum_{j\in \mathbb{N}}\|i\|_2^{\rho-1}\|j\|_2^{\rho-1}|\langle Qe_i,e_j\rangle |< \infty.
\end{equation}
This is for a diagonal operator $Q$ a condition on the trace of $\Delta^{\rho-1}Q$
being finite.

Using (\ref{MatQ}) together with Lemma 4 in \cite{Ref1362}, there exists 
a stochastic process $O:[0,T]\times \Omega\rightarrow V$,
which is the Ornstein-Uhlenbeck process (or stochastic convolution) 
given by the semigrup generated by the Dirichlet Laplacian 
and the Wiener process $W(t)=\sum_{k\in\mathbb{N}} \beta^{k}(t)e_k$.
Furthermore,  Lemma 4 in \cite{Ref1362} assures that $O$ satisfies    
Assumption 3, for all  $\theta \in (0,\min\{\frac{1}{2},\frac{\rho}{2}\})$ and $\gamma \in (0,\rho)$ 
with
\begin{equation}\label{eq46}
\mathbb{P}\Big[\lim_{N\rightarrow \infty}\sup_{0\leq t\leq T}
\Big\|O_{t}- \sum_{i=1}^N
\Big(-\lambda_i\int_0^te^{-\lambda_i(t-s)}\beta^{i}(s)ds+\beta^{i}(t)\Big)e_i\Big\|_{C^0([0,1)]}=0\Big]=1.
\end{equation} 
Let us comment a little bit more on the $Q$-Wiener process.
As $Q$ is a symmetric Hilbert-Schmidt operator, 
there exists an orthonormal basis
$f_k$ given by eigenfunctions of $Q$ with $\alpha_k^2f_k=Qf_k$. 
Using standard theory of \cite{Ref1995}, there is
a family of i.i.d.\ Brownian motions $\{B_k\}_{k\in\mathbb{N}}$ such that
$W(t)=\sum_{ k\in\mathbb{N}}\alpha_k B_k(t) f_k \in L^2([0,1])$. 
We can then define 
\[\beta_k(t)=\langle W(t),e_k\rangle_{L^2} 
=  \sum_{ \ell\in\mathbb{N}}\alpha_\ell B_\ell(t) \langle f_\ell,e_k\rangle_{L^2}.
\]


\subsection{Bounds and solutions}


Let us first assume boundedness of the spectral Galerkin approximation.
This will assure the existence of mild solutions later on.
We will discuss later how to relax this condition to boundedness of the 
numerical data alone.

\begin{assumption}
Let $X^{N}:[0,T]\times \Omega\rightarrow V,~~N\in \mathbb{N}$, 
be a sequence of stochastic processes with continuous sample paths such that
\begin{equation}
\label{e:Galbou}
\sup_{M\in \mathbb{N}}\sup_{0\leq s\leq T}\|X_s^{M}(\omega)\|_{V}<\infty
\end{equation}
and
\begin{equation}\label{S4}
X_{t}^{N}(\omega)=\int_0^{t}P_{N}S_{t-s}F(X_s^{N}(\omega))ds+P_{N}(O_{t}(\omega)),
\end{equation}
for every $t\in [0,T], \omega\in \Omega$ and every $N\in \mathbb{N}$.
\end{assumption}


From \cite{Ref1} we have the following theorem about existence of solutions.


\begin{theorem}\label{theoremBJ}
Let Assumptions 1-4 be fulfilled. Then, there exists a unique
stochastic process $X : [0, T ]\times \Omega\rightarrow V$ with continuous sample paths, which satisfies
\begin{equation}\label{eqBJ1}
X_{t}(\omega)=\int_0^{t}S_{t-s}F(X_s(\omega))ds+O_{t}(\omega),
\end{equation}
for every $t \in [0, T ]$ and every $\omega \in \Omega$. Moreover, there exists a $\mathcal{F}/\mathcal{B}([0,\infty))$-measurable
mapping $C:[0,\infty)\rightarrow \Omega$ such that
\begin{equation}\label{eqBJ2}
\sup_{0\leq t \leq T} \parallel X_{t}(\omega)-X_{t}^{N}(\omega)\parallel_{V} \leq C(\omega)\cdot N^{-\gamma},
\end{equation}
holds for every $N \in \mathbb{N}$ and every $\omega \in \Omega$, 
where $\gamma \in (0,\infty)$
is given in Assumption 1 and Assumption 3.
\end{theorem}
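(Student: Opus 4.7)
The plan is to show that the Galerkin sequence $(X^N)$ is pathwise Cauchy in $C([0,T],V)$ at rate $N^{-\gamma}$, identify its uniform limit as the desired mild solution $X$, and extract (\ref{eqBJ2}) by letting $M\to\infty$ in the Cauchy estimate. Uniqueness and measurability then follow from standard arguments.

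\emph{Step 1: Cauchy estimate for the Galerkin sequence.} Fix $\omega\in\Omega$ and $M\geq N$. Subtracting the mild equations (\ref{S4}) for $X^N$ and $X^M$ gives
\begin{align*}
X_t^N-X_t^M
&=(P_N-P_M)O_t+\int_0^t(P_N-P_M)S_{t-s}\,F(X_s^N)\,ds\\
&\qquad+\int_0^t P_M S_{t-s}\bigl[F(X_s^N)-F(X_s^M)\bigr]\,ds.
\end{align*}
By (\ref{S3}) the first term is bounded by $C(\omega)N^{-\gamma}$. Writing $P_N-P_M=(P_NS_{t-s}-S_{t-s})-(P_MS_{t-s}-S_{t-s})$ and invoking (\ref{S1}) yields $\|(P_N-P_M)S_{t-s}\|_{L(W,V)}\leq CN^{-\gamma}(t-s)^{-\alpha}$, while (\ref{e:Galbou}) combined with (\ref{drift condi}) bounds $\|F(X_s^N)\|_W$ by an $\omega$-dependent constant. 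Since $\alpha<1$, the $s$-integral converges and the second term is again $\leq C(\omega)N^{-\gamma}$. For the third term the triangle inequality plus (\ref{S1}) yields $\|P_MS_{t-s}\|_{L(W,V)}\leq C(t-s)^{-\alpha}$ uniformly in $M$, and (\ref{drift condi}) with (\ref{e:Galbou}) gives $\|F(X_s^N)-F(X_s^M)\|_W\leq C(\omega)\|X_s^N-X_s^M\|_V$. Setting $\phi(t)=\|X_t^N-X_t^M\|_V$ we arrive at
$$\phi(t)\leq C(\omega)N^{-\gamma}+C(\omega)\int_0^t(t-s)^{-\alpha}\phi(s)\,ds,$$
and Henry's singular Gronwall lemma delivers $\sup_{0\leq t\leq T}\phi(t)\leq C(\omega)N^{-\gamma}$.

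\emph{Step 2: Passing to the limit.} The Cauchy estimate shows $X^N\to X$ uniformly on $[0,T]$ for some continuous process $X$, which again satisfies $\sup_{t\in[0,T]}\|X_t(\omega)\|_V<\infty$. Continuity of $F$ on the bounded set containing the $X^N$ and $X$ gives $F(X_s^N)\to F(X_s)$ uniformly in $s$, while $P_NO_t\to O_t$ by (\ref{S3}) and $\|S_{t-s}-P_NS_{t-s}\|_{L(W,V)}\to 0$ by (\ref{S1}). Dominated convergence with integrable majorant $C(t-s)^{-\alpha}$ allows us to pass to the limit in (\ref{S4}) to obtain (\ref{eqBJ1}).

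\emph{Step 3: Error rate, uniqueness and measurability.} Letting $M\to\infty$ in the estimate from Step 1 yields (\ref{eqBJ2}) with the same $C(\omega)$. Uniqueness of the mild solution is a second singular-Gronwall argument applied to the difference of two hypothetical solutions, once more converting the local Lipschitz condition (\ref{drift condi}) into a global one via the inherited a-priori bound. Measurability of $C(\omega)$ is obtained by realising it as a countable supremum over rationals of path-continuous, measurable quantities.

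\emph{Main obstacle.} The crux is turning the merely local Lipschitz condition (\ref{drift condi}) into a globally usable one by invoking the a-priori bound (\ref{e:Galbou}), and then handling the singular kernel $(t-s)^{-\alpha}$ via Henry's inequality. One must be careful to check that the constants arising from the $P_M$-dependence in the third term are controlled \emph{uniformly} in $M$ through (\ref{S1}), so that the Gronwall prefactor is $M$-independent and the final rate is indeed $N^{-\gamma}$.
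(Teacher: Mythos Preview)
Your argument is correct. The paper itself does not give a proof of this theorem at all: it is quoted verbatim from \cite{Ref1} (Bl\"omker--Jentzen), with the single line ``From \cite{Ref1} we have the following theorem about existence of solutions'' preceding it. Your proposal reconstructs precisely the kind of argument that reference contains: split $X^N_t-X^M_t$ into the three pieces coming from the projected OU-process, the projected semigroup acting on $F(X^N)$, and the genuine difference $F(X^N)-F(X^M)$; use the a-priori bound (\ref{e:Galbou}) to upgrade the local Lipschitz condition (\ref{drift condi}) to a global one on the relevant bounded set; and close with a singular Gronwall lemma to obtain the Cauchy estimate at rate $N^{-\gamma}$, from which existence of the limit, the mild equation (\ref{eqBJ1}), the error bound (\ref{eqBJ2}), and uniqueness all follow. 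The care you take in Step~1 to control $\|P_MS_{t-s}\|_{L(W,V)}$ uniformly in $M$ via (\ref{S1}) and the triangle inequality is exactly the point that needs checking, and you have handled it correctly.
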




\section{Time discretization}
%

%
For the time discretization of the finite dimensional SDE
(\ref{S4}) we follow the method proposed in \cite{Ref2},
which was also used in \cite{Ref1362}. 
Fix a small time-step $\Delta t>0$  and 
define the discrete points via the mapping 
$Y_{m}^{N,M}:\Omega\rightarrow V$ for $m\in \{1,...,M\}$ by
\begin{equation}\label{Teq1}
Y_{m+1}^{N,M}(\omega)=S_{\Delta t}\Big(Y_{m}^{N,M}(\omega)
+\Delta t (P_{N}F)(Y_{m}^{N,M}(\omega))\Big)
+P_{N}\Big(O_{(m+1)\Delta t}(\omega)-S_{\Delta t}O_{m\Delta t}(\omega)\Big).
\end{equation}
Thus $Y_{m}^{N,M}$, $m\in \{1,...,M\}$ should be the approximation of the 
spectral Galerkin approximation $X^N$ (see (\ref{Teqq2}) below) 
at times $m \cdot (\Delta t)$.

For simplicity of presentation, we first assume in addition to
(\ref{e:Galbou})  that our numerical data is uniformly bounded:

\begin{assumption}
 For the numerical scheme (\ref{Teq1}) we assume
\begin{equation}\label{Numeric data}
\sup_{0\leq m\leq M}\sup_{N,M\in \mathbb{N}}\|Y_{m}^{N,M}\|_{V}< \infty.
\end{equation}
\end{assumption}
%
%
%
Therefore, in all the examples that one wants to study, we need to verify that 
both bounds (\ref{Numeric data}) and (\ref{Teq1}) are true,
which might be quite involved. We will comment later 
on the extension of the approximation result, in case  either (\ref{Numeric data}) or (\ref{Teq1}) is not verified.

Our aim is now to obtain the discretization error in time
\begin{equation}\label{Teq2}
\|X_{m\Delta t}^{N}(\omega)-Y_{m}^{N,M}(\omega)\|_{V},
\end{equation}
where
\begin{equation}\label{Teqq2}
X_{m\Delta t}^{N}(\omega)=\int_0^{m\Delta t}P_{N}S_{m\Delta t-s}F(X_s^{N}(\omega))ds+O_{m\Delta t}^{N}(\omega)
\end{equation}
is the solution of the spatial discretization, which is evaluated at the grid points.
It should be mentioned that for simplicity of notation, 
during this section $C(\omega,\alpha,\theta)>0$ 
is a random constant which changes from line to line.


\begin{lemma}\label{lemma8}
Suppose Assumptions 1-4 are true.
Let $X^{N}:[0,T]\times \Omega\rightarrow V$ be the unique 
adapted stochastic process with continuous sample paths 
in (\ref{S4}) and  $O^{N}:[0, T]\times \Omega \rightarrow V$ 
is the stochastic process defined in Assumption 4 in (\ref{eq46}). 
Then for all  $\vartheta \in (0,1-\alpha)$, then there is a random  variable $C:\Omega\rightarrow [0,\infty)$ such that 

\begin{equation*}\label{Teq3}
\begin{split}
&\Big\|(X_{t_2}^{N}(\omega)-O_{t_2}^{N}(\omega))-(X_{t_1}^{N}(\omega)-O_{t_1}^{N}(\omega))\Big\|_{V} 
\leq C(\omega)(t_2-t_1)^{\vartheta},
\end{split}
\end{equation*}
for every $N\in\mathbb{N}$,  $\omega\in \Omega$ 
 and all $t_1,t_2\in [0,T]$, with $t_1<t_2$. 
\end{lemma}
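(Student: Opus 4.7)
Writing $Z^N_t := X^N_t - O^N_t$, equation (\ref{S4}) (interpreting $O^N_t = P_N O_t$ as in (\ref{eq46})) gives the mild form
\[
Z^N_t = \int_0^t P_N S_{t-s} F(X^N_s)\,ds.
\]
For $t_1 < t_2$ I would split
\[
Z^N_{t_2} - Z^N_{t_1}
= \int_{t_1}^{t_2} P_N S_{t_2-s} F(X^N_s)\,ds
\; + \; \int_0^{t_1} P_N\bigl(S_{t_2-s} - S_{t_1-s}\bigr) F(X^N_s)\,ds
=: I_1 + I_2,
\]
and estimate each piece. A preliminary observation is that Assumption \ref{ass:nonlin} yields the growth bound $\|F(u)\|_W \leq \|F(0)\|_W + L\|u\|_V(1+\|u\|_V^p)$, which combined with the pathwise bound (\ref{e:Galbou}) gives a (pathwise) constant $K(\omega)$ with $\sup_{N,s}\|F(X_s^N(\omega))\|_W \leq K(\omega)$.

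\textbf{Estimating $I_1$.} Using the uniform boundedness of $P_N$ on $V$, commutativity with $S$, and the first bound in (\ref{S1}),
\[
\|I_1\|_V \leq C\,K(\omega)\int_{t_1}^{t_2} (t_2-s)^{-\alpha}\,ds \leq \tfrac{C}{1-\alpha}\,K(\omega)\,(t_2-t_1)^{1-\alpha}.
\]

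\textbf{Estimating $I_2$.} Factor $S_{t_2-s} - S_{t_1-s} = (S_{t_2-t_1}-I)S_{t_1-s}$ and insert $A^{-\theta}A^{\theta}$:
\[
\bigl\|(S_{t_2-t_1}-I)S_{t_1-s}F(X^N_s)\bigr\|_V
\leq \|A^{-\theta}(S_{t_2-t_1}-I)\|_{L(V,V)}\,\|A^{\theta}S_{t_1-s}F(X^N_s)\|_V.
\]
The first factor is at most $(t_2-t_1)^\theta$ by (\ref{e:SGtime}). For the second, since $F(X^N_s)\in W$, I split the semigroup as $S_{t_1-s} = S_{(t_1-s)/2}S_{(t_1-s)/2}$ and combine (\ref{S1}) with (\ref{e:SGtime}) to get $\|A^\theta S_{t_1-s}\|_{L(W,V)} \leq C(t_1-s)^{-\alpha-\theta}$. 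Then, provided $\alpha+\theta<1$,
\[
\|I_2\|_V \leq C\,K(\omega)\,(t_2-t_1)^{\theta}\int_0^{t_1}(t_1-s)^{-\alpha-\theta}\,ds \leq C\,K(\omega)\,T^{1-\alpha-\theta}(t_2-t_1)^{\theta}.
\]

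\textbf{Conclusion.} Given $\vartheta\in(0,1-\alpha)$, choose $\theta\in[\vartheta,1-\alpha)$ (the analytic semigroup satisfies (\ref{e:SGtime}) for every such $\theta$), and bound $(t_2-t_1)^{1-\alpha}\leq T^{1-\alpha-\vartheta}(t_2-t_1)^\vartheta$ and similarly for the $\theta$-exponent, yielding $\|Z^N_{t_2}-Z^N_{t_1}\|_V \leq C(\omega,\alpha,\theta)(t_2-t_1)^\vartheta$ uniformly in $N$. The main obstacle is the estimate of $I_2$: the range $F\subset W$ forces one to first apply a half semigroup to land in $V$ before extracting a fractional power of $A$, which is the mechanism that pins the Hölder exponent to $1-\alpha$ and explains the appearance of the constraint $\vartheta < 1-\alpha$.
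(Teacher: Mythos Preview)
Your proof is correct and follows essentially the same route as the paper: the same splitting into $I_1$ and $I_2$, the same use of (\ref{S1}) to bound $I_1$ by $(t_2-t_1)^{1-\alpha}$, and the same factorisation $S_{t_2-s}-S_{t_1-s}=S_{t_1-s}(S_{t_2-t_1}-I)$ together with the half-semigroup trick $S_{t_1-s}=S_{(t_1-s)/2}S_{(t_1-s)/2}$ to pass from $W$ to $V$ and then extract $A^\vartheta$ via (\ref{e:SGtime}) for $I_2$. The only cosmetic difference is that the paper applies (\ref{e:SGtime}) directly with exponent $\vartheta$, whereas you carry a generic $\theta$ and specialise at the end; both implicitly use that (\ref{e:SGtime}) holds for every exponent in $(0,1)$, which is the standard analytic-semigroup fact noted after Assumption~1.
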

Note that $\alpha$ was introduced in Assumption 1.

\begin{proof}
\begin{equation}
\begin{split}
&\Big\|(X_{t_2}^{N}(\omega)-O_{t_2}^{N}(\omega))-(X_{t_1}^{N}(\omega)-O_{t_1}^{N}(\omega))\Big\|_{V} \\& =
\Big\|\int_{t_{1}}^{t_{2}}P_{N}S_{t_{2}-s}F(X_{s}^{N}(\omega)) ds+\int_{0}^{t_{1}}P_{N}(S_{t_{2}-s}-S_{t_{1}-s})F(X_{s}^{N}(\omega))ds\Big\|_{V}
 \\& \leq \int_{t_{1}}^{t_{2}}\|P_{N}S_{t_{2}-s}\|_{L(W,V)}.\|F(X_{s}^{N}(\omega)\|_{W}ds+\int_{0}^{t_{1}}\|P_{N}(S_{t_{2}-s}-S_{t_{1}-s})\|_{L(W,V)}\|F(X_{s}^{N}(\omega))\|_{W} ds
\\& \leq \sup_{0\leq s\leq T} \|F(X_{s}^{N}(\omega))\|_{W} \int_{t_{1}}^{t_{2}}(t_{2}-s)^{-\alpha}ds +\int_{0}^{t_{1}}\|P_{N}S_{t_{1}-s}(S_{t_{2}-t_{1}}-I)\|_{L(W,V)}\|F(X_{s}^{N}(\omega))\|_{W}ds 
\\& \leq \Big(\int_{t_{1}}^{t_{2}}(t_{2}-s)^{-\alpha}ds+\int_{0}^{t_{1}}\|P_{N}S_{t_{1}-s}A^{\vartheta}\|_{L(W,V)}\|A^{-\vartheta}(S_{t_{2}-t_{1}}-I)\|_{L(V,V)}ds\Big)\sup_{0\leq s\leq T} \|F(X_{s}^{N}(\omega))\|_{W} 
\\& \leq C(\omega) \Big(\int_{t_{1}}^{t_{2}}(t_{2}-s)^{-\alpha}ds+\int_{0}^{t_{1}}\|P_{N}S_{\frac{t_{1}-s}{2}}\|_{L(W,V)}\|S_{\frac{t_{1}-s}{2}}A^{\vartheta}\|_{L(V,V)}\|A^{-\vartheta}(S_{t_{2}-t_{1}}-I)\|_{L(V,V)}ds\Big)
\\& \leq C(\omega)\Big( (t_{2}-t_{1})^{1-\alpha}+\int_{0}^{t_{1}}(t_{1}-s)^{-\alpha-\vartheta}ds\cdot(t_{2}-t_{1})^{\vartheta}\Big)
\\&
  \leq C(\omega)\Big((t_{2}-t_{1})^{1-\alpha}+(t_{2}-t_{1})^{\vartheta}\Big)
 \leq C(\omega) (t_{2}-t_{1})^{\vartheta},
 \end{split}
\end{equation}
where we have used (\ref{e:SGtime}).
\end{proof}


Before we start to bound the first part of the error, we
define
\begin{equation*}\label{Teq8}
\begin{split}
R(\omega):=
&\sup_{N\in \mathbb{N}}\sup_{0\leq s\leq T}\|F(X_s^{N}(\omega))\|_{W}
+\sup_{N\in \mathbb{N}}\sup_{0\leq s\leq T}\|X_s^{N}(\omega)\|_{V}
\\&
+\sup_{0\leq t_1,t_2\leq T}\|O_{t_2}(\omega)-O_{t_1}(\omega)\|_{V}|t_2-t_1|^{-\vartheta}
\\&
+\sup_{N\in \mathbb{N}}\sup_{0\leq t_1,t_2\leq T}
\|X_{t_2}^{N}(\omega)-O_{t_2}^{N}(\omega)-(X_{t_1}^{N}(\omega)-O_{t_1}^{N}(\omega))\|_{V}|t_2-t_1|^{-\vartheta}.
\end{split}
\end{equation*}
provided  $\vartheta \in (0,\min\{\theta,1-\alpha\})$. 

From  Assumption 2, 4, (\ref{eq45}) and Lemma \ref{lemma8},
 $R:\Omega\rightarrow \mathbb{R}$ is a finite random variable.


\subsection{Theorem}


The first main result of this section is stated below.
\begin{theorem}\label{maintheorem}
Let Assumptions 1-5 be fulfilled and suppose  $\vartheta \in (0,\min\{\theta,1-\alpha\})$.
Then there exists a finite random variable $C:\Omega\rightarrow [0,\infty)$ such that
for all $m\in\{0,1,...,M\}$ and  every $M,N\in \mathbb{N}$
\begin{equation*}
\|X_{m\Delta t}^{N}(\omega)-Y_{m}^{N,M}(\omega)\|_{V}
\leq C(\omega)(\Delta t)^{\vartheta},
\end{equation*}
 for all $\omega \in \Omega$,
 where  $X^{N}:[0,T]\times \Omega\rightarrow V$ is the unique adapted stochastic process with continuous sample paths,
 defined in Assumption 4, and  $Y_{m}^{N,M}:\Omega\rightarrow V$, for $m\in \{0,1,...,M\},$ and $N,M\in \mathbb{N}$, 
is given in (\ref{Teq1}). 
\end{theorem}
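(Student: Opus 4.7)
The overall plan is to recast the scheme and the semi-discrete solution in a comparable one-step form, derive a discrete Duhamel formula for the error, and then close the argument with a discrete version of the weakly-singular Gronwall inequality. The random constant $C(\omega)$ will absorb $R(\omega)$ from the definition above Theorem~\ref{maintheorem}.

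\textbf{Step 1: derive the recursion.} Using the semigroup property in~(\ref{Teqq2}), I will split the time-integral at $m\Delta t$ and pull $S_{\Delta t}$ out of the integral on $[0,m\Delta t]$ to obtain
\[
X_{(m+1)\Delta t}^{N}=S_{\Delta t}X_{m\Delta t}^{N}+\int_{m\Delta t}^{(m+1)\Delta t}P_{N}S_{(m+1)\Delta t-s}F(X_{s}^{N})\,ds+P_{N}\bigl(O_{(m+1)\Delta t}-S_{\Delta t}O_{m\Delta t}\bigr).
\]
Subtracting~(\ref{Teq1}) the stochastic convolution contributions cancel and the error $e_{m}:=X_{m\Delta t}^{N}-Y_{m}^{N,M}$ satisfies $e_{0}=0$ and $e_{m+1}=S_{\Delta t}e_{m}+\Phi_{m}^{(1)}+\Phi_{m}^{(2)}+\Phi_{m}^{(3)}$, where the natural splitting of the local one-step error is
\[
\Phi_{m}^{(1)}=\int_{m\Delta t}^{(m+1)\Delta t}P_{N}S_{(m+1)\Delta t-s}\bigl[F(X_{s}^{N})-F(X_{m\Delta t}^{N})\bigr]ds,
\]
\[
\Phi_{m}^{(2)}=\int_{m\Delta t}^{(m+1)\Delta t}P_{N}\bigl[S_{(m+1)\Delta t-s}-S_{\Delta t}\bigr]F(X_{m\Delta t}^{N})\,ds,\quad \Phi_{m}^{(3)}=\Delta t\,S_{\Delta t}P_{N}\bigl[F(X_{m\Delta t}^{N})-F(Y_{m}^{N,M})\bigr].
\]
Iterating gives the discrete variation of constants formula $e_{m}=\sum_{k=0}^{m-1}S_{(m-1-k)\Delta t}(\Phi_{k}^{(1)}+\Phi_{k}^{(2)}+\Phi_{k}^{(3)})$.

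\textbf{Step 2: bound the local truncation pieces.} For $\Phi^{(1)}$, the triangle inequality $X_{s}^{N}-X_{m\Delta t}^{N}=(X^{N}-O^{N})_{s}-(X^{N}-O^{N})_{m\Delta t}+P_{N}(O_{s}-O_{m\Delta t})$ together with Lemma~\ref{lemma8}, Assumption~3 and the uniform bound~(\ref{e:Galbou}) yields $\|F(X_{s}^{N})-F(X_{m\Delta t}^{N})\|_{W}\le C(\omega)(s-m\Delta t)^{\vartheta}$ through Assumption~\ref{ass:nonlin}. Combining with $\|P_{N}S_{t}\|_{L(W,V)}\le Ct^{-\alpha}$ and summing the telescoping integrals over $k$ I get $\sum_{k}\|S_{(m-1-k)\Delta t}\Phi_{k}^{(1)}\|_{V}\le C(\omega)(\Delta t)^{\vartheta}$. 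For $\Phi^{(2)}$, I use that for $s\in[k\Delta t,(k+1)\Delta t]$, setting $\tau=m\Delta t-s$ and $\sigma=s-k\Delta t$, $S_{(m-1-k)\Delta t}P_{N}[S_{(k+1)\Delta t-s}-S_{\Delta t}]=-P_{N}S_{\tau}(S_{\sigma}-I)$. I then factor $S_{\tau}(S_{\sigma}-I)=S_{\tau/2}A^{\vartheta}\cdot A^{-\vartheta}(S_{\sigma}-I)\cdot S_{\tau/2}$ and apply~(\ref{e:SGtime}) together with $\|S_{\tau/2}\|_{L(W,V)}\le C\tau^{-\alpha}$ and $\|S_{\tau/2}A^{\vartheta}\|_{L(V,V)}\le C\tau^{-\vartheta}$, giving the integrand bound $C\tau^{-\alpha-\vartheta}\sigma^{\vartheta}\|F(X_{k\Delta t}^{N})\|_{W}$. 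Because $\vartheta<1-\alpha$, the double sum telescopes to $\int_{0}^{m\Delta t}\tau^{-\alpha-\vartheta}d\tau\cdot (\Delta t)^{\vartheta}\le C(T)(\Delta t)^{\vartheta}$.

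\textbf{Step 3: treat the propagated error via discrete Gronwall.} For $\Phi^{(3)}$, the semigroup composition absorbs into $S_{(m-k)\Delta t}$, and Assumptions~\ref{ass:nonlin} and~5 give $\|F(X_{k\Delta t}^{N})-F(Y_{k}^{N,M})\|_{W}\le C(\omega)\|e_{k}\|_{V}$. Hence
\[
\|e_{m}\|_{V}\le C(\omega)(\Delta t)^{\vartheta}+C(\omega)\,\Delta t\sum_{k=0}^{m-1}\bigl((m-k)\Delta t\bigr)^{-\alpha}\|e_{k}\|_{V}.
\]
Because $\alpha<1$, the discrete generalized Gronwall inequality (the analogue of the Henry/Dixon–McKee lemma for kernels of the form $t^{-\alpha}$) applies and yields $\|e_{m}\|_{V}\le C'(\omega)(\Delta t)^{\vartheta}$ uniformly in $m\le M$ and $N,M\in\mathbb{N}$.

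\textbf{Expected main obstacle.} The delicate point is the bound on $\Phi^{(2)}$, since the only quantitative semigroup-difference estimate in~(\ref{e:SGtime}) is in $L(V,V)$ while $F(X_{m\Delta t}^{N})$ is only in $W$; the splitting $S_{\tau/2}\,A^{\vartheta}\cdot A^{-\vartheta}(S_{\sigma}-I)\,S_{\tau/2}$ is essential and requires that $\vartheta+\alpha<1$ so the integral in $\tau$ converges. Apart from this, verifying the absence of a growing Gronwall factor (i.e.\ that the weakly singular kernel $((m-k)\Delta t)^{-\alpha}$ does not destroy the $(\Delta t)^{\vartheta}$ rate) is the second technical step, but is standard once the local errors are in the stated form.
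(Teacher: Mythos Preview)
Your proof is correct and rests on the same ingredients as the paper's (the semigroup estimates~(\ref{e:SGtime}), the H\"older regularity from Lemma~\ref{lemma8} and Assumption~3, the local Lipschitz bound, and a discrete Gronwall step), but it is organised differently. The paper does not derive a one-step recursion for $e_{m}$; instead it introduces an intermediate approximation
\[
Y_{m}^{N}=\sum_{k=0}^{m-1}\int_{k\Delta t}^{(k+1)\Delta t}P_{N}S_{(m-k)\Delta t}F(X_{k\Delta t}^{N})\,ds+O_{m\Delta t}^{N}
\]
and splits the error as $\|X_{m\Delta t}^{N}-Y_{m}^{N}\|_{V}+\|Y_{m}^{N}-Y_{m}^{N,M}\|_{V}$ (Lemmas~\ref{lem:bou1} and~\ref{lem:bou2}). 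After composing with $S_{(m-1-k)\Delta t}$, your $\Phi^{(2)}$ and $\Phi^{(3)}$ coincide exactly with the paper's third term in the decomposition of $X_{m\Delta t}^{N}-Y_{m}^{N}$ and with the content of Lemma~\ref{lem:bou2}, respectively. For what you call $\Phi^{(1)}$ the paper takes a slightly longer route: it inserts the Ornstein--Uhlenbeck increment $Z_{s,k\Delta t}^{N}=O_{s}^{N}-O_{k\Delta t}^{N}$ and adds and subtracts $F(X_{k\Delta t}^{N}+Z_{s,k\Delta t}^{N})$, producing two separate terms, whereas you bypass this by using directly that $\|X_{s}^{N}-X_{k\Delta t}^{N}\|_{V}\le C(\omega)(s-k\Delta t)^{\vartheta}$. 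The paper also peels off the summand $k=m-1$ and bounds it crudely by $R(\omega)(\Delta t)^{1-\alpha}$ before treating $k\le m-2$; in your telescoping this is unnecessary because $\alpha+\vartheta<1$ keeps the singular integral convergent down to $\tau=0$. Finally, the paper closes with the ordinary discrete Gronwall lemma, while your formulation with the weakly singular kernel $((m-k)\Delta t)^{-\alpha}$ is the sharper and cleaner statement of the same step. In short, your discrete-Duhamel organisation is a tidy repackaging of the paper's argument, with the OU-insertion intermediate step removed.
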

%

%
\begin{proof}
For the proof it is sufficient to prove the result for sufficiently small $|t_2-t_1|$.
From (\ref{Teqq2}) we have
\begin{equation}\label{Teq6}
\begin{split}
X_{m\Delta t}^{N}(\omega)&=\int_0^{m\Delta t}P_{N}S_{m\Delta t-s}F(X_s^{N}(\omega))ds+O_{m\Delta t}^{N}(\omega)\\
&=\sum_{k=0}^{m-1}\int_{k\Delta t}^{(k+1)\Delta t}P_{N}S_{m\Delta t-s}F(X_s^{N}(\omega))ds+O_{m\Delta t}^{N}(\omega),
\end{split}
\end{equation}
for every $m\in \{0,1,...,M \}$, and every $M \in \mathbb{N}$.

as an intermediate discretization, we
consider the mapping $Y_{m}^{N}:\Omega\rightarrow V,~m=1,2,...,M$  by
\begin{equation}\label{Teq7}
Y_{m}^{N}(\omega)=\sum_{k=0}^{m-1}\int_{k\Delta t}^{(k+1)\Delta t}P_{N} S_{m\Delta t-k\Delta t}F(X_{k\Delta t}^{N}(\omega))ds+O_{m\Delta t}^{N}(\omega).
\end{equation}
Our aim is to bound
$\|X_{m\Delta t}^{N}(\omega)-Y_{m}^{N,M}(\omega)\|_{V}$. 

We split this into two Lemmas.
First for an error  between $Y_{m}^{N}$ and the spectral Galerkin method.


\begin{lemma}
\label{lem:bou1}
 Under the assumptions of Theorem \ref{maintheorem}
there exists a finite random variable $C:\Omega\rightarrow [0,\infty)$ such that
for all $m\in\{0,1,...,M\}$ and  every $M,N\in \mathbb{N}$
\begin{equation}\label{Teq9}
\|X_{m\Delta t}^{N}(\omega)-Y_{m}^{N}(\omega)\|_{V} \leq C(\omega) (\Delta t)^\vartheta,
\end{equation}
for every $\omega \in \Omega$. 
\end{lemma}
Secondly for the difference between $Y_{m}^{N}$ and the full discretization in time


  \begin{lemma}
\label{lem:bou2}
 Under the assumptions of Theorem \ref{maintheorem}
there exists a finite random variable $C:\Omega\rightarrow [0,\infty)$ such that
for all $m\in\{0,1,...,M\}$ and  every $M,N\in \mathbb{N}$
  \begin{equation}\label{error2}
\|Y_{m}^{N}(\omega)-Y_{m}^{N,M}(\omega)\|_{V} \leq C(\omega) \sum_{k=0}^{m-1} \|X_{k\Delta t}^{N}(\omega)-Y_{k}^{N,M}(\omega)\|_{V}.
\end{equation}
\end{lemma}


\subsection{Proof of Lemma \ref{lem:bou1}}


For estimating the first error term stated in (\ref{Teq9}) we have
\begin{equation}\label{Teq10}
\begin{split}
X_{m\Delta t}^{N}(\omega)-Y_{m}^{N}(\omega)=&
\sum_{k=0}^{m-2}\int_{k\Delta t}^{(k+1)\Delta t}P_{N}S_{m\Delta t-s}F(X_s^{N}(\omega))ds
\\&
-\sum_{k=0}^{m-2}\int_{k\Delta t}^{(k+1)\Delta t}P_{N}S_{m\Delta t-k\Delta t}F(X_{k\Delta t}^{N}(\omega))ds
\\&
+\int_{(m-1)\Delta t}^{m\Delta t}P_{N}S_{m\Delta t-s}F(X_s^{N}(\omega))ds 
\\&
-\int_{(m-1)\Delta t}^{m\Delta t}P_{N}S_{\Delta t}F(X_{k\Delta t}^{N}(\omega))ds.
\end{split}
\end{equation}
At first we obtain the bound for the last two integrals in (\ref{Teq10}). 
For the first one, we get
\begin{equation*}\label{Teq11}
\begin{split}
\Big\|\int_{(m-1)\Delta t}^{m\Delta t}P_{N}S_{m\Delta t-s}F(X_s^{N}(\omega))ds\Big\|_{V} &\leq \int_{(m-1)\Delta t}^{m\Delta t}\|P_{N}S_{m\Delta t-s}\|_{L(W,V)}\cdot\|F(X_s^{N}(\omega))\|_{W}ds\\
&\leq \int_{(m-1)\Delta t}^{m\Delta t} (m\Delta t-s)^{-\alpha}ds\sup_{0\leq s\leq t}\|F(X_s^{N}(\omega))\|_{W}\\
&\leq  C R(\omega) (\Delta t)^{1-\alpha}.
\end{split}
\end{equation*}
For the second term we obtain similarly
\begin{equation*}\label{Teq12}
\begin{split}
\Big\|\int_{(m-1)\Delta t}^{m\Delta t}P_{N}S_{\Delta t}F(X_{k\Delta t}^{N}(\omega))ds\Big\|_{V}
&\leq \sup_{0\leq s\leq t}\|F(X_s^{N}(\omega))\|_{W}\int_{(m-1)\Delta t}^{m\Delta t} (\Delta t)^{-\alpha}ds\\
&\leq R(\omega) (\Delta t)^{1-\alpha}.
\end{split}
\end{equation*}
Therefore we have
\begin{equation}\label{Teq100}
\begin{split}
\|X_{m\Delta t}^{N}(\omega)-Y_{m}^{N}(\omega)\|_{V}\leq &
\Big\|\sum_{k=0}^{m-2}\int_{k\Delta t}^{(k+1)\Delta t}P_{N}S_{m\Delta t-s}F(X_s^{N}(\omega))ds
\\&
-\sum_{k=0}^{m-2}\int_{k\Delta t}^{(k+1)\Delta t}P_{N}S_{m\Delta t-k\Delta t}F(X_{k\Delta t}^{N}(\omega))ds\Big\|_{V}
\\&
+R(\omega)(\Delta t)^{1-\alpha}.
\end{split}
\end{equation}
Now we insert the OU-process. Define 
\[Z_{s,k\Delta t}^N(\omega)=O_s^{N}(\omega)-O_{k\Delta t}^{N}(\omega)
\]
Thus for every $m\in \{0,1,...,M\}$ we have,
\begin{equation}\label{Serror}
\begin{split}
&\Big\|X_{m\Delta t}^{N}(\omega)-Y_{m}^{N}(\omega)\Big\|_{V}\\
&\leq \Big\|\sum_{k=0}^{m-2}\int_{k\Delta t}^{(k+1)\Delta t}P_{N}S_{m\Delta t-s}\Big(F(X_s^{N}(\omega))-F\big(X_{k\Delta t}^{N}(\omega)+Z_{s,k\Delta t}^N(\omega)\big)\Big)ds\Big\|_{V}\\
&+\Big\|\sum_{k=0}^{m-2}\int_{k\Delta t}^{(k+1)\Delta t}P_{N}S_{m\Delta t-s}\Big(F(X_{k\Delta t}^{N}(\omega)+Z_{s,k\Delta t}^N(\omega))-F(X_{k\Delta t}^{N}(\omega))\Big)ds\Big\|_{V}\\
&+\Big\|\sum_{k=0}^{m-2}\int_{k\Delta t}^{(k+1)\Delta t}\Big(P_{N}S_{m\Delta t-s}-P_{N}S_{m\Delta t-k\Delta t}\Big)F(X_{k\Delta t}^{N}(\omega))ds\Big\|_{V}\\
&+R(\omega) (\Delta t)^{1-\alpha}.\\
\end{split}
\end{equation}
For the first term in (\ref{Serror}) by using (\ref{drift condi}) and Lemma \ref{lemma8} 
we conclude
\begin{equation}\label{Teq15}
\begin{split}
& \Big\|\sum_{k=0}^{m-2}\int_{k\Delta t}^{(k+1)\Delta t}P_{N}S_{m\Delta t-s}\Big(F(X_s^{N}(\omega))-F\big(X_{k\Delta t}^{N}(\omega)+Z_{s,k\Delta t}^N(\omega)\big)\Big)ds\Big\|_{V}\\&
  \leq L\sum_{k=0}^{m-2}
\int_{k\Delta t}^{(k+1)\Delta t}(m\Delta t-s)^{-\alpha}
\Big\|X_s^{N}(\omega)-(X_{k\Delta t}^{N}(\omega)+Z_{s,k\Delta t}^N(\omega))\Big\|_{V}\\
&\qquad\qquad\qquad\qquad\qquad\cdot\Big(1+\|X_s^{N}(\omega)\|_{V}^{p}+\|X_{k\Delta t}^{N}(\omega)+Z_{s,k\Delta t}^N(\omega)\|_{V}^{p}\Big)ds\\&
 \leq C(\omega)\sum_{k=0}^{m-2}\int_{k\Delta t}^{(k+1)\Delta t}(m\Delta t-s)^{-\alpha}(s-k\Delta t)^{\vartheta}\Big(1+2R^{p}(\omega)+(s-k\Delta t)^{p\vartheta}\Big)ds\\&
 \leq C(\omega) (\Delta t)^{\vartheta}.
\end{split}
\end{equation}  
For the second term in (\ref{Serror}) by (\ref{drift condi}) we derive
\begin{equation}\label{Teq16}
\begin{split}
&\Big\|\sum_{k=0}^{m-2}\int_{k\Delta t}^{(k+1)\Delta t}P_{N}S_{m\Delta t-s}\Big(F\big(X_{k\Delta t}^{N}(\omega)+Z_{s,k\Delta t}^N(\omega)\big)-F(X_{k\Delta t}^{N}(\omega))\Big)ds\Big\|_{V}\\
&\leq L\sum_{k=0}^{m-2}\int_{k\Delta t}^{(k+1)\Delta t}\Big\|P_{N}S_{m\Delta t-s}\Big\|_{L(W,V)}\Big\|X_{k\Delta t}^{N}(\omega)+Z_{s,k\Delta t}^N(\omega)-X_{k\Delta t}^{N}(\omega)\Big\|_{V}\\
&\qquad \qquad \qquad  \qquad \cdot 
\Big(1+\Big\|X_{k\Delta t}^{N}(\omega)+Z_{s,k\Delta t}^N(\omega)\Big\|_{V}^{p}+\|X_{k\Delta t}^{N}(\omega)\|_{V}^{p}\Big)
\\
&\leq C(\omega) \sum_{k=0}^{m-2}\int_{k\Delta t}^{(k+1)\Delta t}(m\Delta t-s)^{-\alpha}(s-k\Delta t)^{\vartheta}\Big(1+2R^{p}(\omega)+(s-k\Delta t)^{p\vartheta}\Big)\\& \leq
C(\omega)(\Delta t)^{\vartheta}.
\end{split}
\end{equation}

Finally, for the third term in (\ref{Serror}) we drive
\begin{equation}\label{Teq18}
\begin{split}
&\Big\|\sum_{k=0}^{m-2}\int_{k\Delta t}^{(k+1)\Delta t}\left(P_{N}S_{m\Delta t-s}-P_{N}S_{m\Delta t-k\Delta t}\right)F(X_{k\Delta t}^{N}(\omega))ds\Big\|_{V}\\
& \leq \sum_{k=0}^{m-2}\int_{k\Delta t}^{(k+1)\Delta t}\Big\|P_{N}S_{m\Delta t-k\Delta t}(S_{k\Delta t-s}-I)F(X_{k\Delta t}^{N}(\omega))\Big\|_{V}ds
\\& \leq
\sum_{k=0}^{m-2}\int_{k\Delta t}^{(k+1)\Delta t}\|P_{N}S_{\frac{m\Delta t-k\Delta t}{2}}\|_{L(W,V)}
\cdot\|A^{\theta}S_{\frac{m\Delta t-k\Delta t}{2}}\|_{L(V,V)}
\\& \qquad \qquad \qquad  \qquad \qquad  
\cdot\|A^{-\theta}(S_{k\Delta t-s}-I)\|_{L(V,V)}
\cdot\|F(X_{k\Delta t}^{N}(\omega))\|_{W}ds\\
&\leq C(\omega) \sum_{k=0}^{m-2}\int_{k\Delta t}^{(k+1)\Delta t}(m\Delta t-k\Delta t)^{-\alpha-\theta}(k\Delta t-s)^{\theta}\|F(X_{k\Delta
t}^{N}(\omega))\|_{W}ds\\& \leq
C(\omega)(\Delta t)^{\theta}\leq C(\omega)(\Delta t)^{\vartheta},
\end{split}
\end{equation}
where we used (\ref{e:SGtime}) from the assumption on the semigroup.

Hence, from (\ref{Teq15}), (\ref{Teq16}) and (\ref{Teq18}) we get
 \begin{equation}\label{Teq19}
\begin{split}
&\|X_{m\Delta t}^{N}(\omega)-Y_{m}^{N}(\omega)\|_{V} \leq C(\omega) (\Delta t)^{\vartheta}.
\end{split}
\end{equation}


\subsection{Proof of Lemma \ref{lem:bou2}}


Now, for the second error term in (\ref{error2}) because $Y_{m}^{N,M}:\Omega\rightarrow V$ satisfies
\begin{equation}\label{Teq21}
Y_{m}^{N,M}(\omega)=\sum_{k=0}^{m-1}\int_{k\Delta t}^{(k+1)\Delta t}P_{N} S_{m\Delta t-k\Delta t}F(Y_{k}^{N,M}(\omega))ds+P_{N}O_{m\Delta t}(\omega).
\end{equation}
and by using (\ref{Numeric data}), we can estimate
 \begin{equation}\label{Teq22}
\begin{split}
&\|Y_{m}^{N}(\omega)-Y_{m}^{N,M}(\omega)\|_{V}=\Big\|\sum_{k=0}^{m-1}\int_{k\Delta t}^{(k+1)\Delta t}P_{N}S_{m\Delta t-k\Delta t}(F(X_{k\Delta t}^{N}(\omega))-F(Y_{k}^{N,M}(\omega)))ds\Big\|_{V}\\
&\leq L\sum_{k=0}^{m-1}\int_{k\Delta t}^{(k+1)\Delta t}(m\Delta t-k\Delta t)^{-\alpha}\Big\|X_{k\Delta t}^{N}(\omega)-Y_{k}^{N,M}(\omega)\Big\|_{V}\Big(1+\Big\|X_{k\Delta t}^{N}(\omega)\Big\|^{p}_{V}+\Big\|Y_{k}^{N,M}(\omega)\Big\|^{p}_{V}\Big)ds\\
&\leq C(\omega)\sum_{k=0}^{m-1}\Delta t(m\Delta t-k\Delta t)^{-\alpha}\Big\|X_{k\Delta t}^{N}(\omega)-Y_{k}^{N,M}(\omega)\Big\|_{V}.
\end{split}
\end{equation}
Therefore we have
\begin{equation}
\|Y_{m}^{N}(\omega)-Y_{m}^{N,M}(\omega)\|_{V} \leq
 C(\omega) \sum_{k=0}^{m-1} \|X_{k\Delta t}^{N}(\omega)-Y_{k}^{N,M}(\omega)\|_{V}.
\end{equation}
\end{proof}
Now from Lemma \ref{lem:bou1} with Lemma \ref{lem:bou2}, we get
\begin{equation}\label{Teq24}
\begin{split}
\|X_{m\Delta t}^{N}&(\omega)-Y_{m}^{N,M}(\omega)\|_{V} 
\leq C\Big(R(\omega), \theta,T,L\Big)(\Delta t)^{\vartheta}\\
&+ C(\omega) \sum_{k=0}^{m-1} \|X_{k\Delta t}^{N}(\omega)-Y_{k}^{N,M}(\omega)\|_{V}.
\end{split}
\end{equation}
By the discrete Gronwall Lemma we finally conclude
\begin{equation*}\label{Teq27}
\|X_{m\Delta t}^{N}(\omega)-Y_{m}^{N,M}(\omega)\|_{V}
\leq C \Big(R(\omega),\theta,T,L\Big)(\Delta t)^{\vartheta}.
\end{equation*}


\subsection{Main results -- Full Discretization}


Combining Theorem 
\ref{maintheorem} for the time discretization and Theorem \ref{theoremBJ}
for the spatial discretization, yields the following result 
on the full discretization

\begin{theorem}\label{maintheorem2}
Let Assumptions 1-5 be true.
Let $X:[0,T]\times \Omega\rightarrow V$ be the solution of the SPDE (\ref{eqBJ1})
and  $Y_{m}^{N,M}:\Omega\rightarrow V$, $m\in \{0,1,...,M\}, M,N\in \mathbb{N}$ the numerical 
solution given by (\ref{Teq1}). Fix $\vartheta \in (0,\min\{\theta,1-\alpha\})$, then there exists a finite random variable $C:\Omega\rightarrow [0,\infty)$
such that
\begin{equation}\label{Teq31}
\|X_{m\Delta t}(\omega)-Y_{m}^{N,M}(\omega)\|_{V}\leq C(\omega)\left(N^{-\gamma}+(\Delta t)^{\vartheta}\right)
\end{equation}
for all $m\in\{0,1,...,M\}$ and  every $M,N\in \mathbb{N}$.
\end{theorem}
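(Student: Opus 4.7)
The plan is to reduce the full discretization error to a sum of the two error sources already controlled: the spatial (spectral Galerkin) error and the temporal error. The natural vehicle is the triangle inequality applied at the grid point $t = m\Delta t$, inserting the intermediate object $X_{m\Delta t}^{N}$ (the spectral Galerkin approximation evaluated at the grid point) between the true solution and the fully discrete scheme:
\begin{equation*}
\|X_{m\Delta t}(\omega)-Y_{m}^{N,M}(\omega)\|_{V}
\leq \|X_{m\Delta t}(\omega)-X_{m\Delta t}^{N}(\omega)\|_{V}
+ \|X_{m\Delta t}^{N}(\omega)-Y_{m}^{N,M}(\omega)\|_{V}.
\end{equation*}

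For the first term on the right, I would invoke Theorem \ref{theoremBJ}, which is taken from \cite{Ref1}: under Assumptions 1--4 the mild solution $X$ exists uniquely and there is a finite random variable (call it $C_1(\omega)$) such that $\sup_{0\le t\le T}\|X_t(\omega)-X_t^{N}(\omega)\|_V\le C_1(\omega)N^{-\gamma}$. In particular this covers the grid point $t=m\Delta t$ uniformly in $m$ and $M$. For the second term I would invoke Theorem \ref{maintheorem}, whose hypotheses are exactly Assumptions 1--5 and the same range $\vartheta\in(0,\min\{\theta,1-\alpha\})$: this yields a finite random variable $C_2(\omega)$ with $\|X_{m\Delta t}^{N}(\omega)-Y_{m}^{N,M}(\omega)\|_V\le C_2(\omega)(\Delta t)^{\vartheta}$, uniformly in $m\in\{0,\dots,M\}$ and in $N,M\in\mathbb{N}$.

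Combining the two bounds and setting $C(\omega):=C_1(\omega)+C_2(\omega)$ (still a finite, $\mathcal{F}$-measurable random variable) gives the stated estimate
\begin{equation*}
\|X_{m\Delta t}(\omega)-Y_{m}^{N,M}(\omega)\|_{V}\leq C(\omega)\left(N^{-\gamma}+(\Delta t)^{\vartheta}\right)
\end{equation*}
for every $m\in\{0,1,\dots,M\}$ and every $M,N\in\mathbb{N}$. There is no real obstacle here beyond checking that the two previously cited results apply with a common choice of $\vartheta$ and $\gamma$; both use precisely the standing Assumptions 1--5, and the uniformity in $m,M,N$ needed to pass from a supremum over $t\in[0,T]$ to the grid value $m\Delta t$ is already built into their conclusions. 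The only mildly delicate point is bookkeeping of the random constants, which is handled by taking the (almost surely finite) sum of the two.
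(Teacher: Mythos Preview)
Your proposal is correct and matches the paper's approach exactly: the paper simply states that the result follows by ``Combining Theorem \ref{maintheorem} for the time discretization and Theorem \ref{theoremBJ} for the spatial discretization,'' which is precisely your triangle-inequality splitting through $X_{m\Delta t}^{N}$. In fact your write-up is more detailed than what the paper provides, since the paper gives no further argument beyond that one sentence.
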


For simplicity of presentation we supposed in Theorem \ref{maintheorem2} both the 
full discretization (\ref{Numeric data}) 
and the Galerkin approximation (\ref{Teq1}) 
to be uniformly bounded. 

Following the proofs, it is easy to verify that it is sufficient to assume only 
one of those assumptions. Let us comment in more detail on the extension of 
the approximation result in that case.
Let us focus on the case where the uniform bound (\ref{Numeric data}) for the full discretization  
is not satisfied.

First it is easy to verify that the following minor modification of our main result is true.
Its proof follows directly, by observing, 
that the proof of the main theorem never uses the supremum over $M$ or $N$.

\begin{theorem}\label{maintheorem_modify}
Let Assumptions 1-4 be true. Fix $\vartheta \in (0,\min\{\theta,1-\alpha\})$ and fix a non-negative random constant $K(\omega)$.
Let $X:[0,T]\times \Omega\rightarrow V$ be the solution of the SPDE (\ref{eqBJ1})
and  $Y_{m}^{N,M}:\Omega\rightarrow V$, $m\in \{0,1,...,M\}, M,N\in \mathbb{N}$ the numerical 
solution given by (\ref{Teq1}). 

Then there exists a finite random variable $C:\Omega\rightarrow [0,\infty)$,
depending on $K$, but independent of $M$ and $N$,
such that the following is true:

If for one choice of $N,M\in\mathbb{N}$ we have 
\begin{equation}
\sup_{0\leq m\leq M} \|Y_{m}^{N,M}\|_{V}\leq K(\omega)
\end{equation}
then 
\begin{equation}
\|X_{m\Delta t}(\omega)-Y_{m}^{N,M}(\omega)\|_{V}\leq C(\omega)\left(N^{-\gamma}+(\Delta t)^{\vartheta}\right)
\end{equation}
for all $m\in\{0,1,...,M\}$.
\end{theorem}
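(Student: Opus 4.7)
The plan is to retrace the proof of Theorem \ref{maintheorem2} for the one specific pair $(N,M)$ provided by the hypothesis, verifying that Assumption 5 enters only at a single identifiable place and can there be replaced by the pointwise bound $K(\omega)$. Since the conclusion is to be shown only for this same pair, no passage to a supremum over $M$ or $N$ is needed at any step.

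First I would examine the random variable $R(\omega)$ defined just before Theorem \ref{maintheorem}. Its four constituents bound $F(X^N)$, $X^N$, and H\"older increments of $O$ and of $X^N-O^N$ — none of which involves $Y^{N,M}$ — and finiteness is guaranteed by Assumptions 2, 3, 4, together with Lemma \ref{lemma8}. Hence $R(\omega)$ is well defined under Assumptions 1--4 alone. In particular Lemma \ref{lem:bou1} carries over verbatim for the fixed $(N,M)$: its proof (\ref{Teq10})--(\ref{Teq19}) manipulates only $X^N$, $O^N$, and the semigroup and never touches $Y^{N,M}$, so the bound $\|X_{m\Delta t}^N - Y_m^N\|_V \le C(\omega)(\Delta t)^\vartheta$ with $C(\omega) = C(R(\omega),\theta,T,L)$ is unchanged.

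The one point where Assumption 5 actually enters the original argument is the factor $(1 + \|X_{k\Delta t}^N\|_V^p + \|Y_k^{N,M}\|_V^p)$ inside the local Lipschitz estimate (\ref{Teq22}) of Lemma \ref{lem:bou2}. For $k \in \{0,\dots,m-1\}$ with $m \le M$, the new hypothesis provides $\|Y_k^{N,M}(\omega)\|_V \le K(\omega)$, and the earlier observation gives $\|X_{k\Delta t}^N(\omega)\|_V \le R(\omega)$. Thus the factor is bounded by $1 + R(\omega)^p + K(\omega)^p$, and (\ref{Teq22}) goes through with constant $C(R(\omega),K(\omega),\theta,T,L)$. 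The subsequent discrete Gronwall step (\ref{Teq24}) then yields $\|X_{m\Delta t}^N(\omega) - Y_m^{N,M}(\omega)\|_V \le C(\omega)(\Delta t)^\vartheta$ for the chosen $(N,M)$, with a constant depending on $K(\omega)$ through the Lipschitz factor but independent of $M, N$ otherwise. Combining with the spatial estimate (\ref{eqBJ2}) of Theorem \ref{theoremBJ} — which depends only on Assumptions 1--4 — via the triangle inequality
\[
\|X_{m\Delta t}(\omega) - Y_m^{N,M}(\omega)\|_V \le \|X_{m\Delta t}(\omega) - X_{m\Delta t}^N(\omega)\|_V + \|X_{m\Delta t}^N(\omega) - Y_m^{N,M}(\omega)\|_V
\]
gives the asserted $C(\omega)(N^{-\gamma} + (\Delta t)^\vartheta)$ bound.

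The main obstacle, such as it is, is essentially bookkeeping: one must check carefully that no constant along the chain of estimates quietly invokes a supremum over $M$ or $N$. Since every inequality in the original proof of Theorem \ref{maintheorem} is pointwise in $(N,M)$ — Lemma \ref{lemma8}, the three terms in (\ref{Serror}), and the Gronwall step all use only the fixed data $R(\omega)$ and now $K(\omega)$ — no such hidden dependence exists, and the theorem follows.
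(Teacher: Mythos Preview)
Your proposal is correct and matches the paper's own argument, which consists of the single observation that ``the proof of the main theorem never uses the supremum over $M$ or $N$.'' You have simply unpacked that remark in detail: you locate the one spot (the Lipschitz factor in (\ref{Teq22})) where $\|Y_k^{N,M}\|_V$ appears, replace the uniform bound from Assumption~5 by the hypothesis $K(\omega)$, and note that everything else (including $R(\omega)$, which does take a supremum over $N$ but only of $X^N$-quantities controlled by Assumption~4) is already pointwise in $(N,M)$.
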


To proceed, note that in the proofs
we can always bound  every occurrence of 
$\|Y_{m}^{N,M}\|^p_{V}$ by the bounded
$\|X_{t}^{N}\|_{V}$ and the error
\[e^{N,M}_t=\sup_{m\Delta t \leq t} \|Y_{m}^{N,M} - X_{m \Delta t}^{N}\|_{V}\;.
\]
I.e., we use for $m \leq t/\Delta t$
\[\|Y_{m}^{N,M}\|^p_{V} \leq C_p \|X_{m\Delta t}^{N}\|_{V}^p + C_p (e^{N,M}_t)^p\;.
\] 
If we now assume a-priori that $e^{N,M}_t \leq 1$, which is easily true, for sufficiently small $t>0$, 
then we can proceed completely analogous as in the proofs of Theorem \ref{maintheorem2}. 

By Theorem \ref{maintheorem_modify} this implies now that for the error, probably with a different $C(\omega)$ that
 \[e^{N,M}_t \leq C(\omega) (\Delta t)^\vartheta \;. 
 \]
 As the right hand-side is independent of $M$ and $N$,
we can a-posteriori conclude, that as long as $C(\omega) (\Delta t)^\vartheta \leq 1$
our initial guess on $e^{N,M}$ was true, and
we finally derive the following theorem:

\begin{theorem}\label{maintheorem_modify2}
Let Assumptions 1-4 be true. 
Let $X:[0,T]\times \Omega\rightarrow V$ be the solution of the SPDE (\ref{eqBJ1})
and  $Y_{m}^{N,M}:\Omega\rightarrow V$, $m\in \{0,1,...,M\}, M,N\in \mathbb{N}$ the numerical 
solution given by (\ref{Teq1}). 

Then there exists a finite random variable $C:\Omega\rightarrow [0,\infty)$
such that the error estimate
(\ref{Teq31}) holds provided $0<\Delta t< C(\omega)^{-\vartheta}$. 
\end{theorem}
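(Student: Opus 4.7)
The plan is a bootstrap argument: tentatively assume that the discretisation error at the current grid point is bounded by $1$, use this to replace Assumption 5 in the previous proofs, apply Theorem \ref{maintheorem_modify} to extract the sharper bound $C(\omega)(\Delta t)^\vartheta$, and then observe that as soon as $\Delta t$ lies below a random threshold the sharper bound retroactively justifies the tentative assumption.

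Set $e_m^{N,M}(\omega) := \|X_{m\Delta t}^N(\omega) - Y_m^{N,M}(\omega)\|_V$. By Theorem \ref{theoremBJ} together with Assumptions 2 and 4, the quantities $K_X(\omega) := \sup_{N,t}\|X_t^N(\omega)\|_V$ and $\sup_{N,s}\|F(X_s^N(\omega))\|_W$ are almost surely finite, so the triangle inequality yields $\|Y_m^{N,M}\|_V \leq K_X(\omega)+e_m^{N,M}$ and in particular $\|Y_m^{N,M}\|_V \leq K_X(\omega)+1$ whenever $e_m^{N,M} \leq 1$. Define the stopping index $m^\star = m^\star(\omega,N,M) \in \{0,\dots,M+1\}$ as the smallest $m$ with $e_m^{N,M}(\omega) > 1$, with the convention $m^\star = M+1$ if no such index exists. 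The crucial structural observation is that in the estimate for $e_m^{N,M}$ in Lemma \ref{lem:bou2} only the norms $\|Y_k^{N,M}\|_V^p$ for $k \leq m-1$ appear, so on all indices $k < m^\star$ these norms are deterministically bounded by $(K_X(\omega)+1)^p$.

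Consequently, rerunning Lemmas \ref{lem:bou1} and \ref{lem:bou2} on the truncated horizon $\{0,\dots,m^\star\}$ and applying the discrete Gronwall lemma produces a random variable $D(\omega)$, depending on $K_X(\omega)$ and $R(\omega)$ but independent of $M$ and $N$, such that $e_m^{N,M}(\omega) \leq D(\omega)(\Delta t)^\vartheta$ for all $m \leq m^\star$; this is exactly the content of Theorem \ref{maintheorem_modify} applied with $K(\omega) := K_X(\omega)+1$ on the horizon $m^\star \Delta t$. The bootstrap then closes: as soon as $D(\omega)(\Delta t)^\vartheta < 1$, evaluating the estimate at $m = m^\star$ gives $e_{m^\star}^{N,M}(\omega) < 1$, contradicting the defining strict inequality unless $m^\star = M+1$. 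Hence $e_m^{N,M}(\omega) \leq D(\omega)(\Delta t)^\vartheta$ on the entire grid $\{0,\dots,M\}$; combining with the spatial estimate $\sup_t\|X_t(\omega) - X_t^N(\omega)\|_V \leq C(\omega)N^{-\gamma}$ from Theorem \ref{theoremBJ} via the triangle inequality, and redefining $C(\omega)$ so that the smallness condition on $\Delta t$ takes the form announced in the statement, yields (\ref{Teq31}).

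The main obstacle — and the only place where genuine care is required — is the propagation step from indices $< m^\star$ to the index $m^\star$ itself: one must audit every occurrence of $\|Y_k^{N,M}\|_V^p$ in the proofs of Lemmas \ref{lem:bou1} and \ref{lem:bou2} and confirm that the dependence is strictly on indices $k \leq m-1$, so that the a priori bound $e_k^{N,M} \leq 1$ available for $k < m^\star$ is enough to control $e_{m^\star}^{N,M}$ without circularity. Once this purely bookkeeping verification is done, no additional analytic input is needed beyond what was already established in Theorem \ref{maintheorem_modify}.
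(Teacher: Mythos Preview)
Your proposal is correct and follows essentially the same bootstrap argument as the paper: bound $\|Y_m^{N,M}\|_V$ by $\|X^N_{m\Delta t}\|_V$ plus the error, assume a~priori the error is at most $1$, rerun the proof of Theorem~\ref{maintheorem} to obtain $e^{N,M}_m\le C(\omega)(\Delta t)^\vartheta$, and close the loop whenever $C(\omega)(\Delta t)^\vartheta<1$. Your stopping-index formulation with $m^\star$ and the explicit check that Lemma~\ref{lem:bou2} only involves $\|Y_k^{N,M}\|_V^p$ for $k\le m-1$ is a more careful rendering of what the paper sketches informally as an ``a-priori/a-posteriori'' argument, but the content is the same.
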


The case when the uniform bound  (\ref{e:Galbou}) on the spectral Galerkin approximation fails, 
is verified in a similar  way, by bounding in the whole proof  $\|X_{t}^{N}\|_{V}^p$ by $\|Y_{m}^{N,M}\|^p_{V}$ and $e^{N,M}_t$.


\section{Numerical results}


In this section we consider examples for the numerical solution of stochastic equation 
by the method given in (\ref{Teq1}).
Let $ V = W = C([0,\pi])$ be the $\mathbb{R}$-Banach space
of continuous functions from $[0,\pi]$ to $\mathbb{R}$ equipped with the norm
$\|v\|_{V}=\|v\|_{W}:= \sup_{x\in [0,\pi]} |v(x)|$
for every $v\in V=W$ , where $|.|$ is the absolute value of a real number. Moreover,
consider as orthonormal $L^2$-basis the smooth eigenfunctions 
 \begin{equation*}
e_{k}:[0,\pi]\rightarrow \mathbb{R},~~~ e_{k}(x)=\sqrt{2/\pi}\sin(kx),
\qquad\text{for every } x\in (0,\pi).
\end{equation*}
Denote the Laplacian with Dirichlet boundary conditions on $[0,\pi]$ by $A$,
such that $Ae_k = - k^2e_k$. Moreover, define the operators  $P_N$ as the 
$L^2$-orthogonal projections onto the span of the first $N$ eigenfunctions $e_k$.

We define the mapping $S:[0,T]\rightarrow L(V)$  by 
\begin{equation}
(S_{t})v(x)=\sum_{i\in \mathbb{N}}e^{-\lambda_{i}t}\int e_{i}(s) v(s)ds\cdot e_{i}(x),
\end{equation}
where $\lambda_{i}=-i^{2}$.
It is well known that $A$ generates the analytic semigroup $(S_t)_{t\geq0}$ on $V$.
See \cite{Ref2013}.
From  Lemma 4.1 in \cite{Ref1} and Lemma 1 in \cite{Ref1362}
we recall that (\ref{S1}) is satisfied 
for $\gamma \in (0,\frac{3}{2})$ and $\alpha \in (\frac{1}{4}+\frac{\gamma}{2},1)$. Moreover, 
from \cite{Ref2013} we know that  (\ref{e:SGtime}) 
is satisfied for any $\theta\in (0,1)$.\\

Assume that the OU-process  $O:[0,T]\times \Omega\rightarrow V$ is 
as defined in the example in Section \ref{suse:OU}. Therefore $O$ satisfies    
Assumption 3, for all  $\theta \in (0,\min\{\frac{1}{2},\frac{\rho}{2}\})$ and $\gamma \in (0,\rho)$.
The covariance operator $Q$ is given 
as a convolution operator
\begin{equation}\label{MathQ}
\langle Q e_{k},e_{l} \rangle =\int_0^{\pi}\int_0^{\pi}e_{k}(x)e_{l}(y)q(x-y)dy dx.
\end{equation}
We obtain our numerical result with two kernels
\begin{equation}\label{cov1exam1}
q_1(x-y)=\frac1h\max\{0,1- \frac1{h^2} |x-y|\}
\end{equation}
and
\begin{equation}\label{cov2exam1}
q_2(x-y)=\max\{0,1 - \frac1h |x-y|\}.
\end{equation}

In Figures \ref{CovMatrix1}, \ref{CovMatrix2} we  plotted the Covariance Matrix for $h=0.1$ and $h=0.01$ with kernel (\ref{cov1exam1}) and kernel (\ref{cov2exam1}).

By some numerical calculations 
we can show that the condition on $Q$ from (\ref{MatQ}) 
is satisfied for any $\rho\in (0,\frac{1}{2})$,
as we can calculate explicitly 
the Fourier-coefficients of $(x,y)\mapsto q(x-y)$ 
and check for summability.

For simplicity fix the smooth deterministic initial condition  
\[
 \xi(x)=\frac{\sin x}{\sqrt{2}}+\frac{3\sqrt{2}}{5}\sin(3x), \quad\text{ for all }x\in [0,\pi].
\]

Now we consider two types of nonlinearity, globally Lipschitz and locally Lipschitz, as given by the following examples.
\\
\textbf{Example 1}
  Consider for the nonlinearity
the Nemytskii operator $F:V\rightarrow V$ given by $(F(v))(x) = f(v(x))$ for
every $x \in [0,\pi]$ and every $v \in V$, 
where $f: \mathbb{R}\rightarrow \mathbb{R}$ 
is given by 
\begin{equation}
f(y)=5\cdot\frac{1-y}{1+y^{2}}.
\end{equation}
This generates a globally Lipschitz nonlinearity.
Thus Assumption 2 is true.\\
The stochastic equation (\ref{intr1}) now reads as
\begin{equation}\label{eq4exam1}
dX_{t}=\Big[\frac{\partial ^{2}}{\partial x^{2}}X_{t}+5\frac{1-X_{t}}{1+X_{t}^{2}}\Big]dt+dW_{t},
~~X_0(x)=\frac{\sin x}{\sqrt{2}}+\frac{3\sqrt{2}}{5}\sin(3x),
\end{equation}
with Dirichlet boundary conditions $X_{t}(0)=X_{t}(\pi)=0$ 
for $t\in [0,1] $.

The finite dimensional SDE (\ref{S4}) reduces to
\begin{equation} 
dX_{t}^{N}
=\Big[\frac{\partial ^{2}}{\partial x^{2}}X_{t}^{N}
+5P_{N}\frac{1-X_{t}^{N}}{1+(X_{t}^{N})^{2}}\Big]dt
+d P_{N}W_{t},~~X_0^{N}(x)=\frac{\sin x}{\sqrt{2}}+\frac{3\sqrt{2}}{5}\sin(3x),
\end{equation}
with $X_{t}^{N}(0)=X_{t}^{N}(\pi)=0$ for $t\in [0,1]$ and $x\in [0,\pi]$, 
and all $N \in \mathbb{N}.$

Now in our simple example we can verify rigorously that the 
 numerical data is uniformly bounded. 
We derive
\begin{equation} 
\begin{split}
\|X_{t}^{N}(\omega)\|_{V}=&\Big\|\int_0^{t}P_{N}S_{t-s}F(X_s^{N}(\omega))ds+P_{N}(O_{t}(\omega))\Big\|_{V}\\
& \leq \int_0^{t}\|P_{N}S_{t-s}\|_{L(V,V)}\|F(X_s^{N}(\omega))\|_{V}ds+\|P_{N}(O_{t}(\omega))\|_{V}\\
& \leq C \int_0^{t}\|P_{N}S_{t-s}\|_{L(V,V)}(1+\|X_s^{N}(\omega)\|_{V})ds+C,
\end{split}
\end{equation}
from Gronwall inequality we conclude
\begin{equation}
\sup_{N\in \mathbb{N}}\sup_{0\leq s \leq T}\|X_{s}^{N}(\omega)\|_{V} <\infty. 
\end{equation}
In a similar way we conclude
\begin{equation}
\sup_{N,M\in \mathbb{N}}\sup_{0\leq m \leq M}\|Y_{m}^{N,M}(\omega)\|_{V} <\infty. 
\end{equation}
Theorem \ref{maintheorem2} by using this fact that here $\theta \in (0,\min\{\frac{1}{2},\frac{\rho}{2}\})$, yields the existence of a unique solution $X:[0,\pi]\times \Omega \rightarrow C^{0}([0,\pi])$ of the SPDE (\ref{eq4exam1})
such that
\begin{equation}
\sup_{0\leq x \leq \pi}|X_{m\Delta t}(\omega,x)-Y_{m}^{N,M}(\omega,x)|\leq C(\omega)\left(N^{-\gamma}+(\Delta t)^{\vartheta}\right)
\end{equation}
for $m=1,...,M,~~ M=\frac{1}{\Delta t}$, such that $\gamma \in (0,\frac{1}{2})$, $\vartheta \in(0,\frac{1}{4})$.\\

\textbf{Example 2}
  Consider for the nonlinearity
the Nemytskii operator $F:V\rightarrow V$ given by $(F(v))(x) = f(v(x))$ for
every $x \in [0,\pi]$ and every $v \in V$, 
where $f:\mathbb{R}\rightarrow \mathbb{R}$ 
is given by 
\begin{equation}
f(y)=-y^{3}.
\end{equation}
This generates a locally Lipschitz nonlinearity which satisfies Assumption \ref{ass:nonlin}.
The stochastic equation (\ref{intr1}) now reads as
\begin{equation}\label{eq4exam2}
dX_{t}=\Big[\frac{\partial ^{2}}{\partial x^{2}}X_{t}-X_{t}^{3}\Big]dt+dW_{t},
~~X_0(x)=\frac{\sin x}{\sqrt{2}}+\frac{3\sqrt{2}}{5}\sin(3x),
\end{equation}
with Dirichlet boundary conditions $X_{t}(0)=X_{t}(\pi)=0$ 
for $t\in [0,1] $.

The finite dimensional SDE (\ref{S4}) reduces to
\begin{equation}\label{eq5exam1}
dX_{t}^{N}
=\Big[\frac{\partial ^{2}}{\partial x^{2}}X_{t}^{N}
-P_{N}(X_{t}^{N})^{3}\Big]dt
+d P_{N}W_{t},~~X_0^{N}(x)=\frac{\sin x}{\sqrt{2}}+\frac{3\sqrt{2}}{5}\sin(3x),
\end{equation}
with $X_{t}^{N}(0)=X_{t}^{N}(\pi)=0$ for $t\in [0,1]$ and $x\in [0,\pi]$, 
and all $N \in \mathbb{N}.$

Now by
using Theorem \ref{maintheorem_modify2} it remains to verify (\ref{e:Galbou})
from Assumption 4. This is straightforward by using first the estimates in $L^2$, 
and we sketch only the main ideas here.

Define $y^N_t=X_t^N-P_{N}O(t)$. Thus
\[
\partial_t y^N_t = \frac{\partial ^{2}}{\partial x^{2}}y^N_t -(y^N_t +P_{N}O_t)^{3}\;.
\]
Hence,
\begin{equation} 
\tfrac12\partial_t\|y_{t}^{N}\|^2_{L^2} 
= -\| \partial_x  y_{t}^{N}\|^2 - \tfrac12 \|y_{t}^{N}\|^4_{L^4} + C \|P_{N}O_t\|^4_{L^4}\;.
\end{equation}
This gives a random bound in $L^2([0,T], H^1) \cap L^\infty([0,T], L^2)\cap L^4([0,T], L^4)$.
Using Agmon inequality yields
\[   \|y^N_t\|_{L^4([0,T],V)}^2 \leq C  \|y^N_t\|_{L^2([0,T],H^1)} \cdot \|y^N_t\|_{L^\infty([0,T],L^2)}\;.
\]
This is sufficient to verify the bound in $V$ from the mild formulation, as 
\[
 \|y^N_t\|_{V} \leq  C \int_0^{t}\|P_{N}S_{t-s}\|_{L(V,V)}\cdot  \|y^N_s+P_N O_s\|_{V}^3 ds\;.
 \]
 
Now from our main results 
for the  unique solution $X:[0,\pi]\times \Omega \rightarrow C^{0}([0,\pi])$ of the SPDE (\ref{eq4exam2})
we obtain for sufficiently small $\Delta t$
\begin{equation}
\sup_{0\leq x \leq \pi}|X_{m\Delta t}(\omega,x)-Y_{m}^{N,M}(\omega,x)|\leq C(\omega)\left(N^{-\gamma}+(\Delta t)^{\vartheta}\right)
\end{equation}
for $m=1,...,M,~~ M=\frac{1}{\Delta t}$, such that $\gamma \in (0,\frac{1}{2})$, $\vartheta \in(0,\frac{1}{4})$.\\
Let us now explain briefly how we implement our numerical results. 
The main part is generating the Brownian motions $X=(X_1,X_2,\cdots,X_N)$ that are correlated such that $X\sim \textit{N}(0,\Sigma)$, which $Cov(X_i,X_j)=\Sigma_{ij}$. 
For this 
assume $C$ is a $n\times m$ Matrix and let $Z=(Z_{1},\cdots,Z_{N})^{T}$, 
with $Z_{i}\sim \textit{N}(0,1)$, for $i=1,\cdots,N.$ 
Then obviously $C^{T}Z\sim N(0,C^{T}C)$. 
Therefore our aim clearly reduces to finding $C$ such that $C^{T}C=\Sigma$,
which can for instance be achieved by Cholesky. 
By using $\Delta t=\frac{T}{N^{2}}$,  
the solutions $X_{t}^{N}(\omega,x)$ of the finite dimensional SODEs (\ref{eq5exam1}) converge uniformly in
 $t\in [0,1]$ and $x\in [0,\pi]$ to the solution $X_{t}(\omega,x)$ 
of the stochastic evolution equation (\ref{eq4exam2}) with the rate $\frac{1}{2}$, 
as $N$ goes to infinity for all $\omega \in \Omega.$ 
In Figure \ref{Figpatherr} the path-wise approximation error
 \begin{equation}\label{patherr}
\sup_{0\leq x \leq \pi}\sup_{0\leq m \leq M}|X_{m\Delta t}(\omega,x)-Y_{m}^{N,M}(\omega,x)|\
\end{equation}
is plotted against $N$, for $N \in \{16,32,\cdots,256\}$.
As a replacement for the true unknown solution, 
we use a numerical approximation for $N$ sufficiently large.  

Figure \ref{Figpatherr} confirms that, as we expected from Theorem \ref{maintheorem_modify2}, 
the order of convergence is $\frac{1}{2}.$ Obviously, 
these are only two examples, but all out 
of a few hundred calculated examples behave similarly. 
Even their mean seem to behave with 
the same order of the error. Nevertheless, we did not calculate sufficiently 
many realizations to estimate the mean satisfactory, 
nor did we proof in the general setting, that the mean converges. 

Finally, as an example in Figures \ref{Evolution}, 
 $X_{t}(\omega)$, are plotted for $t \in  [0,T]$ for $T\in \{\frac{3}{200},0.2,1\},$
 for $h=0.1$, with convolution operator (\ref{MathQ})
 with kernel (\ref{cov1exam1}) and (\ref{cov2exam1}).


 \begin{figure}
	\centering
	\hfill %
	\subfloat[]{\includegraphics[width=0.48\textwidth]{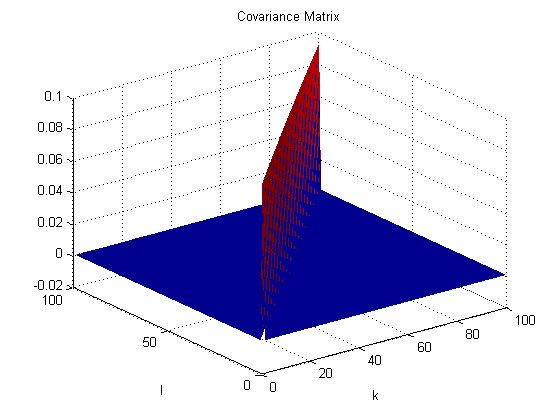}}
	\hfill %
	\subfloat[]{\includegraphics[width=0.48\textwidth]{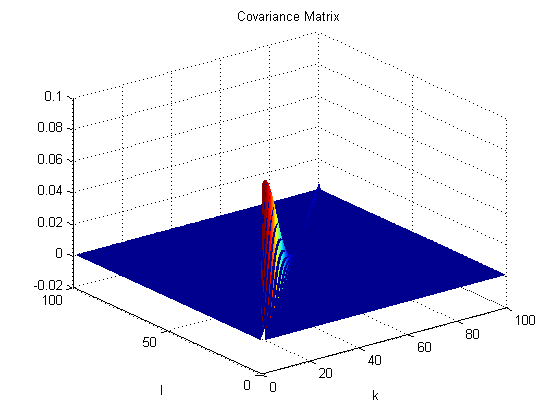}}
	\hfill %
	\caption[Covariance Matrix ]{Covariance Matrix $<Qe_{k},e_{l}>_{k,l}$ for $k,l \in \{1,2,\cdots,100\}$, for $h=0.1$ by (a) kernel (\ref{cov1exam1})  and (b) kernel (\ref{cov2exam1})}
	\label{CovMatrix1}
\end{figure}

\begin{figure}
	\centering
	\hfill %
	\subfloat[]{\includegraphics[width=0.48\textwidth]{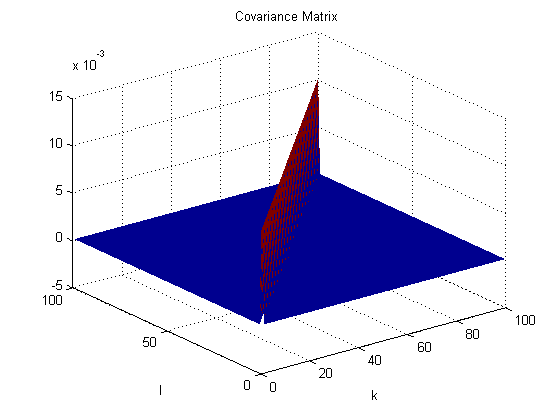}}
	\hfill %
	\subfloat[]{\includegraphics[width=0.48\textwidth]{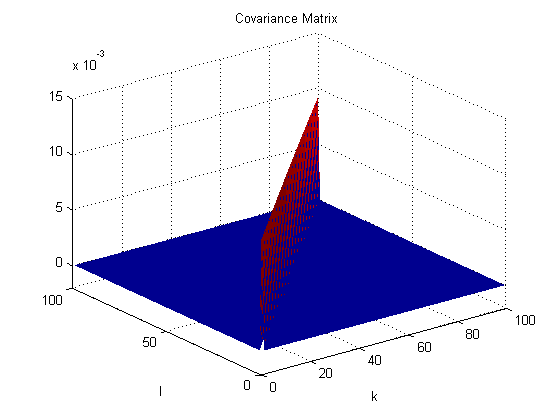}}
	\hfill %
	\caption[Covariance Matrix ]{Covariance Matrix $<Qe_{k},e_{l}>_{k,l}$ for $k,l \in \{1,2,\cdots,100\}$, for $h=0.01$  (a) kernel (\ref{cov1exam1})  and (b) kernel (\ref{cov2exam1})}
	\label{CovMatrix2}
\end{figure}

\begin{figure}
	\centering
	\hfill %
	\subfloat[]{\includegraphics[width=0.48\textwidth]{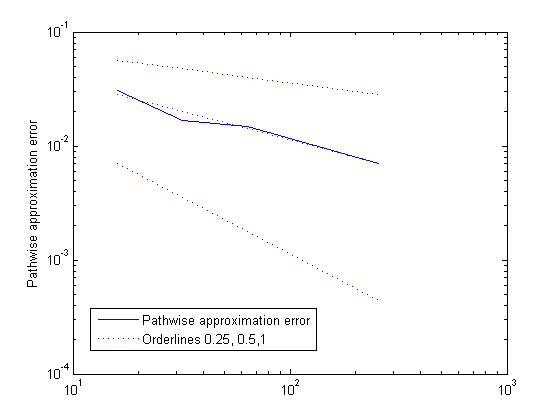}}
	\hfill %
	\subfloat[]{\includegraphics[width=0.48\textwidth]{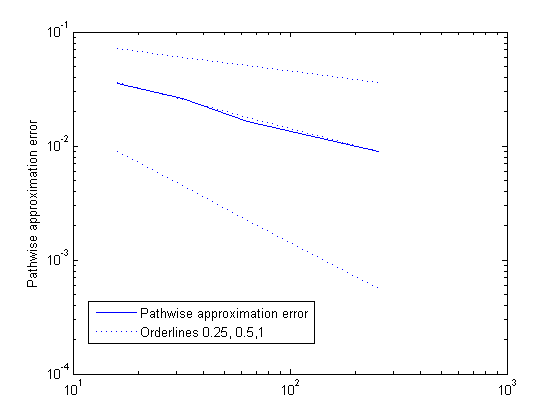}}
	\hfill %
	\caption[Pathwise approximation error]{Pathwise approximation error (\ref{patherr}) 
against $N$  for $N \in \{16,32,...,256\}$ with convolution operator with kernel (\ref{cov1exam1}) for (a) $h=0.1$  and (b) $h=0.01$, for one random $\omega \in \Omega$.} 
\label{Figpatherr}
\end{figure} 
\begin{figure}
	\centering
	\hfill %
	{\includegraphics[width=0.48\textwidth]{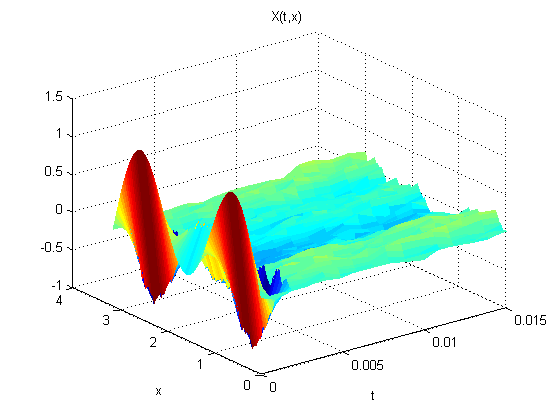}}
	\hfill %
	{\includegraphics[width=0.48\textwidth]{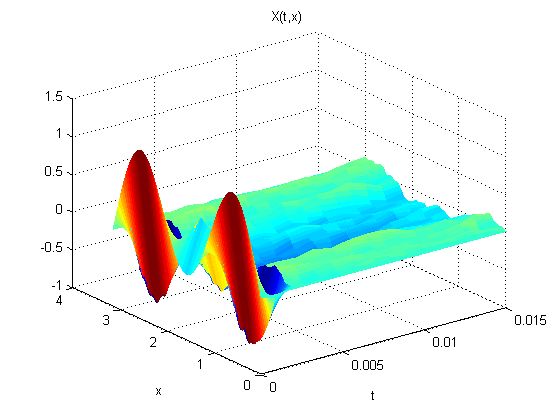}}
	\hfill %
	\end{figure}
\begin{figure}
	\centering
	\hfill %
	{\includegraphics[width=0.48\textwidth]{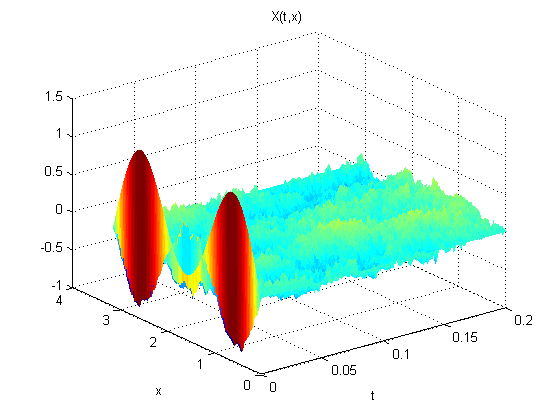}}
	\hfill %
	{\includegraphics[width=0.48\textwidth]{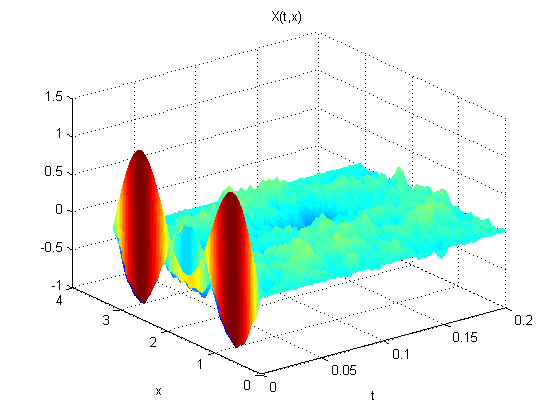}}
	\hfill %
	\end{figure}
\begin{figure}
	\centering
	\hfill %
	\subfloat[]{\includegraphics[width=0.48\textwidth]{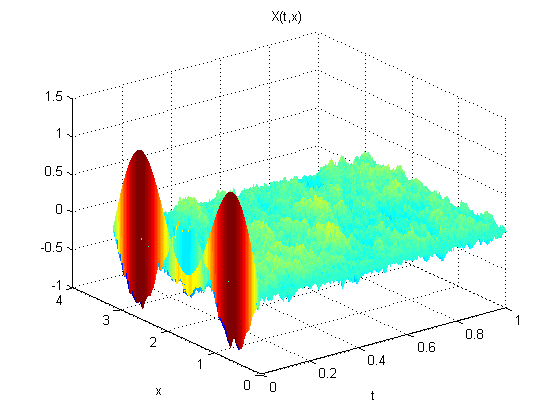}}
	\hfill %
	\subfloat[]{\includegraphics[width=0.48\textwidth]{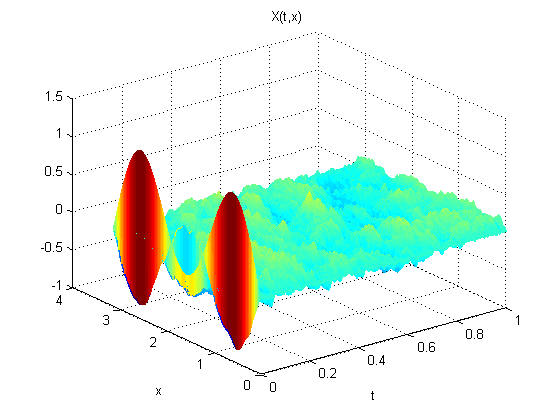}}
	\hfill %
	\caption[Stochastic Evolution equation]{ $X_{t}(\omega,x)$, $ x \in [0,\pi],~t\in (0,T)$ for $T\in \{3/200,0.2,1\}$, given by (\ref{eq4exam1}) for $h=0.1$ with the covariance operator by (a) kernel (\ref{cov1exam1})  and (b) kernel (\ref{cov2exam1}), for one random $\omega \in \Omega$.}
	\label{Evolution}
\end{figure}

\end{document}